\newcommand{\dec}{\mathsf{d}}
\newcommand{\x}{\hat{\mathsf{x}}}
\newcommand{\Li}{\mathsf{L}}
\newcommand{\T}{\mathsf{T}}
\newcommand{\cT}{\mathcal{T}}
\newcommand{\ind}[1]{\mathbbm{1}_{\{#1\}}}   
\newcommand{\indno}[1]{\mathbbm{1}_{#1}}   
\newcommand{\Exp}{\mathsf{E}}                  
\newcommand{\Pro}{\mathsf{P}}                  
\newcommand{\Hyp}{\mathsf{H}}                  
\newcommand{\cF}{\mathscr{F}}
\newcommand{\cG}{\mathscr{G}}
\newcommand{\cH}{\mathscr{H}}
\newcommand{\cY}{\mathscr{Y}}
\newcommand{\cN}{\mathscr{N}}
\newcommand{\cC}{\mathscr{C}}
\newtheorem{theorem}{Theorem}
\newtheorem{lemma}{Lemma}
\begin{document}
\begin{frontmatter}
\title{Sequential Joint Detection and Estimation}
\runtitle{SEQUENTIAL JOINT DETECTION AND ESTIMATION}

\author{Yasin Yilmaz\thanks{Y. Yilmaz and X. Wang are with Columbia University, NY, USA.}, George V.~Moustakides\thanks{G.V. Moustakides is with the University of Patras, Patras, Greece.}, and Xiaodong Wang$^*$}
\runauthor{Y. YILMAZ, G.V. MOUSTAKIDES, X. WANG}

\begin{abstract}
\textbf{Abstract}. We consider the problem of simultaneous detection and estimation under a sequential framework. In particular we are interested in sequential tests that distinguish between the null and the alternative hypothesis and every time the decision is in favor of the alternative they provide an estimate of a random parameter. As we demonstrate with our analysis treating the two subproblems separately with the corresponding optimal strategies does not result in the best possible performance. To enjoy optimality one needs to take into account the optimum estimator during the hypothesis testing phase.
\end{abstract}

\end{frontmatter}

\section{Introduction} \label{sec:intro}

Suppose we are observing sequentially two processes $\{y_t\}$, $\{h_t\}$ which are related through the following model
\begin{equation}
y_t=xh_t+w_t;~~t=1,2,\ldots.
\label{eq:data_model}
\end{equation}
Process $\{w_t\}$ is a noise sequence; $x$ a random variable described by the following two hypotheses\\
$~~~~~\Hyp_0:~~x=0$, \\
$~~~~~\Hyp_1:~~x\sim\varphi(x),$\\
where $x\sim\varphi(x)$ means that the random variable $x$ follows the pdf $\varphi(x)$; and $\{h_t\}$ a second observed process that affects in a time-varying and random way the value of the random variable $x$. In other words, under the null hypothesis the observed sequence $\{y_t\}$ is pure noise whereas under the alternative hypothesis it contains a mean which is related to the random parameter $x$ and scaled through the second measured sequence $\{h_t\}$. 

Sequences of this form arise in several applications in practice, the most notable being digital communications where $x$ denotes the information to be transmitted. Under hypothesis $\Hyp_0$ no transmission takes place, consequently the receiver measures pure noise. Under hypothesis $\Hyp_1$ information $x$ is transmitted and the sequence $\{h_t\}$ models the attenuation inflicted on this variable by a lossy and time-varying communication channel. We should mention that in digital communications it is customary to consider that the channel sequence $\{h_t\}$ can be measured, consequently, assuming that this process is available, is realistic (see Proakis and Salehi (2008)).

The mathematical challenge we would like to consider in this work consist: a)~in \textit{deciding as soon as possible} between the two hypotheses, and b)~every time a decision is made in favor of $\Hyp_1$, to \textit{provide an estimate of the random variable $x$}. As we realize, we have a \textit{joint} detection and estimation problem where both subproblems are of equal importance. Indeed we note that \textit{we like to have a reliable estimate} of $x$ every time \textit{we detect its presence}.

Key element in our formulation constitutes the fact that we are interested in performing the joint detection/estimation task as soon as possible, suggesting that we intend to focus on \textit{sequential} schemes to solve the joint problem. Finally, we would like to emphasize that our analysis is going to demonstrate that solving the joint problem by treating each subproblem separately with the corresponding optimal procedure does not yield an overall optimum performance. As we shall see, the detection part needs to take into account the fact that we are also interested in parameter estimation in order for the combined scheme to perform optimally.

Sequential joint detection/estimation differs from sequential composite hypothesis testing where parameters are either marginalized or treated as nuisance (see Lerche (1986), Pavlov (1990)). Actually, joint detection/estimation resembles to sequential multi-hypothesis testing where there is a \textit{discrete} set of possible probability measures that describe the observations and we need to select one of the existing possibilities. Characteristic articles treating this problem are: Armitage (1950), Lorden (1977), Tartakovsky (1998) and Dragalin et al. (1999). The joint case studied in this work differs from the previous approaches in the sense that we have a parametric family of measures (parametrized by $x$) and we need to select the correct parameter value, \textit{after} establishing that this value is not 0. Existing literature related to joint detection/estimation is very limited and addressing only the fixed sample size case. The articles by Middleton and Esposito (1968), Fredriksen et al.~(1972), Moustakides (2011) and Moustakides et al.~(2012), offer different formulation possibilities for the fixed sample size version. In the current work, we are focusing on the setup proposed by Moustakides (2011) and extend the corresponding result to the sequential case.

Let us now become more technical by introducing the detection/estimation strategies we are interested in. Assuming observations become available sequentially in pairs $\{(y_t,h_t)\}$, let $\{\cF_t\}_{t\ge0}$ denote the corresponding filtration with $\cF_t=\sigma\{(y_1,h_1),\ldots,(y_t,h_t)\}$ and $\cF_0$ the trivial $\sigma$-algebra. We also define two additional filtrations $\{\cY_t\}_{t\ge0}$ and $\{\cH_t\}_{t\ge0}$ with $\cY_t=\sigma\{y_1,\ldots,y_t\}$ and $\cH_t=\sigma\{h_1,\ldots,h_t\}$, that is, the accumulated history pertinent to the first and second observed sequence respectively, and $\cY_0,\cH_0$ being, again,  trivial $\sigma$-algebras. We clearly have $\cF_t=\cY_t\cup\cH_t$, therefore $\cH_t\subseteq\cF_t$. 

According to what we mentioned, we are looking for a triplet $(T,d_T,\hat{x}_T)$ where $T$ is a stopping time, $d_T$ a decision rule that distinguishes between the two hypotheses and $\hat{x}_T$ an estimator for $x$. The detector $d_T$ and the estimator $\hat{x}_T$ are $\cF_T$-measurable functions, namely they use all available information acquired up to time of stopping $T$, for deciding between the two hypotheses and for providing an estimate for $x$ every time this is deemed necessary (i.e.~whenever the detector decides in favor of $\Hyp_1$). For the stopping time $T$, the obvious choice would be to ask this quantity to be $\{\cF_t\}$-adapted, namely, at each time $t$ to use all available information to decide whether to stop or continue sampling. Unfortunately, imposing this requirement induces serious analytical complications. This fact is already known for the two \textit{separate} subproblems of detection and estimation. For instance if we assume that we always have $y_t=xh_t+w_t$ and we are interested in estimating $x$ then, as it is mentioned in Ghosh (1987) and in Ghosh and Sen (1991), finding the optimum sequential estimator of $x$ is not a tractable problem if $T$ is adapted to the complete observation history $\{\cF_t\}$. Instead, Grambsch (1983) and more recently Fellouris (2012), proposed to limit $T$ to $\{\cH_t\}$-adapted strategies, assumption that leads to simple and interesting optimal solution.

Similar analytical difficulties arise in the pure sequential hypothesis testing problem of distinguishing between $\Hyp_0$ and $\Hyp_1$. If we require $T$ to be $\{\cF_t\}$-adapted and attempt to solve this problem following, for example, the classical approach of Wald and Wolfowitch (1948), then the optimum scheme is not the usual SPRT as one would expect. This is because by observing the pair process $\{(y_t,h_t)\}$ we end up with a two-dimensional optimal stopping problem which is impossible to solve (analytically) since the thresholds for the running likelihood ratio will depend on the sequence $\{h_t\}$. Only if the sequence $\{h_t\}$ is constant, or not observed (or even ignored) and, additionally, we assume it is i.i.d.~with known pdf, then the detection problem can be reduced to the one considered by Wald and Wolfowitz (1948), accepting as solution the classical SPRT. In this case the stopping time $T$ becomes $\{\cY_t\}$-adapted and the decision function $d_T$ must be selected to be $\cY_T$-measurable. 

An alternative direction would be to consider, as in the pure estimation problem, $\{\cH_t\}$-adapted stopping times but, as we suggested above, allow the decision function $d_T$ to have access to the complete information, that is, be $\cF_T$-measurable. This is the approach we intend to adopt in this work. In fact we are going to apply this idea directly to the more general joint detection/estimation problem. As we shall see, our analysis will also offer the solution to the pure detection problem by proper parameter selection. Next we summarize our assumptions.

{\sc Assumptions:} i)~The two processes $\{w_t\},\{h_t\}$ are independent and independent from the random variable $x$ with the noise process $\{w_t\}$ being i.i.d.~with $w_t\sim\cN(0,\sigma^2)$, where $\cN(a,b^2)$ denotes Gaussian pdf with mean $a$ and variance $b^2$. ii)~For $x$, under $\Hyp_1$, we assume that $x\sim\cN(\mu_x,\sigma_x^2)$, in other words the prior $\varphi(x)$ is the Gaussian pdf; while under $\Hyp_0$ we assume $x=0$. Parameters $\mu_x,\sigma_x,\sigma$ are considered known. iii)~For the second observation process $\{h_t\}$ we only make the very mild assumption
\begin{equation}
\Pro\left(\sum_{t=0}^\infty h_t^2=\infty\right)=1,
\label{eq:wp1}
\end{equation}
that is, with probability 1, each realization of this process has infinite energy over the infinite time horizon. No other condition is imposed on $\{h_t\}$, consequently, for this process no \textit{dependency or time variability model is specified}, and the actual statistical description \textit{is not required to be known}. iv)~The stopping time $T$ is $\{\cH_t\}$-adapted while the decision function $d_T$ and the estimator $\hat{x}_T$ are $\cF_T$-measurable functions having access to the complete observation history up to the time of stopping.

In the rest of our article, with $\Pro_0,\Exp_0$ we denote probability measure and expectation under hypothesis $\Hyp_0$; with $\bar{\Pro}_1,\bar{\Exp}_1$ probability measure and expectation under hypothesis $\Hyp_1$ including the statistical description of the random variable $x$ and, finally, with $\Pro_1,\Exp_1$ probability measure and expectation under hypothesis $\Hyp_1$ but with $x$ being marginalized.

Before continuing with our problem formulation it would be worth mentioning a very recent article by Cetin, Novikov and Shiryaev (2013) that refers to the pure parameter estimation problem, treating a very similar data model as the one introduced in \eqref{eq:data_model}. The basic difference between the two data types is that in our case, as we have pointed out in our assumptions, the two sequences $\{h_t\},\{w_t\}$ are in discrete time and are \textit{independent}; whereas in Cetin, Novikov and Shiryaev (2013) they are continuous-time processes \textit{related} through a diffusion type stochastic differential equation. This difference allows for completely different mathematical setups, even though the final optimum procedures turn out to be very similar.

\section{Problem formulation}
In sequential detection and estimation we are usually interested in minimizing the average delay subject to suitable constraints. However, in order to free our formulation from the need to specify a probability measure for the process $\{h_t\}$, we will adopt the same idea employed in sequential estimation, namely consider expected delays, error probabilities and average costs \textit{conditioned} on the sequence $\{h_t\}$. This approach will give rise to a triplet $(T,d_T,\hat{x}_T)$ which will be optimum for \textit{each realization} of $\{h_t\}$ and not on average with respect to this sequence, as is the usual case in classical Sequential Analysis.

Since we are interested in the two subproblems of detection and estimation we have a number of quantities that are pertinent to each case. For the detection part we have the Type-I and Type-II error probabilities that accept the following conditional form: $\Pro_0(d_T=1|\cH_T)$ and $\Pro_1(d_T=0|\cH_T)$. For the estimation problem we assume that we are under hypothesis $\Hyp_1$ and we adopt as cost function the mean squared error. We recall that our estimate depends on the decision of our detector, in particular: whenever $d_T=1$ we provide an estimate $\hat{x}_T$ which inflicts a squared estimation error $(\hat{x}_T-x)^2$, where $x$ is the true value of our random parameter. Alternatively, when the detector erroneously decides in favor of $\Hyp_0$, that is, $d_T=0$, then this is like estimating our parameter as $\hat{x}_T=0$ (since under $\Hyp_0$ we have $x=0$) generating a squared error $(0-x)^2=x^2$. Consequently, for the estimation subproblem there are the following two conditional mean squared errors that are of interest:
$\bar{\Exp}_1[(\hat{x}_T-x)^2\ind{d_T=1}|\cH_T]$ and $\bar{\Exp}_1[x^2\ind{d_T=0}|\cH_T]$, where $\indno{A}$ denotes the indicator of the event $A$. 

Now, we can use these four quantities to form the following \textit{combined cost function}
\begin{multline}
\cC(T,d_T,\hat{x}_T)=c_0\Pro_0(d_T=1|\cH_T)+c_1\Pro_1(d_T=0|\cH_T)\\
+c_e\bar{\Exp}_1\left[(\hat{x}_T-x)^2\ind{d_T=1}+x^2\ind{d_T=0}|\cH_T\right]
\label{eq:eq2}
\end{multline}
where $c_0,c_1,c_e$ are nonnegative values selected by the Statistician. The last term in the right hand side of \eqref{eq:eq2}, which refers to the estimation problem, as we can see, depends on both parts, namely our decision and our estimation strategy. Furthermore, we note that if we set $c_e=0$ then the combined cost depends only on the decision rule $d_T$ suggesting that our joint problem is reduced into a pure detection problem.

To define an optimum joint scheme, we will follow a constrained optimization approach, therefore we are going to consider triplets $(T,d_T,\hat{x}_T)$ for which the combined cost $\cC(T,d_T,\hat{x}_T)$ is upper bounded by some prescribed value. From the class of triplet strategies which is generated through this constraint we will select the one that \textit{minimizes} the stopping time $T$. More specifically we would like to solve the following constrained optimization problem:
\begin{equation}
\inf_{T,d_T,\hat{x}_T}T;~~~~~\text{subject to:}~\cC(T,d_T,\hat{x}_T)\leq C,
\label{eq:eq3}
\end{equation}
where $C>0$ is the maximal combined cost we are willing to tolerate. Note that since $T$ is $\{\cH_t\}$-adapted, as we mentioned before and would like to emphasize once more, the triplet we are going to develop will minimize $T$ for \textit{each} realization of the process $\{h_t\}$ and not $\Exp[T]$, where the average is taken over \textit{all} realizations of this process, as is the usual case in classical optimal stopping problems.

\section{Optimum solution}
The optimum triplet will be obtained in three steps. First we will propose a candidate estimator by solving a smaller auxiliary optimization problem, then we are going to use this solution to propose a candidate detector that takes into account the previous estimator by solving a second auxiliary optimization problem and, in the end, we will provide a candidate stopping time and show that all three proposed parts constitute the triplet that solves the original constrained optimization problem depicted in \eqref{eq:eq3}. Let us continue by first identifying our candidate estimator.

\subsection{Optimum estimation}
Fix the stopping time $T$ assuming that it is finite with probability 1 and the decision function $d_T$. Consider the problem of minimizing the conditional mean squared error $\bar{\Exp}_1[(\hat{x}_T-x)^2\ind{d_T=1}|\cH_T]$ with respect to the estimator $\hat{x}_T$. We have the following lemma that gives the solution to this problem and also provides a useful expression for the second term $\bar{\Exp}_1[x^2\ind{d_T=0}|\cH_T]$ of the estimation cost.

\begin{lemma}\label{lem:1}
The optimum estimator $\x_T$ that minimizes the conditional mean squared error $\bar{\Exp}_1[(\hat{x}_T-x)^2\ind{d_T=1}|\cH_T]$ with respect to $\hat{x}_T$, on the event $\{T=t\}$, is given by the following formula
\begin{equation}
		\x_t = \frac{V_t + \mu_x\frac{\sigma^2}{\sigma_x^2}}{U_t + \frac{\sigma^2}{\sigma_x^2}};~~\text{where}~~V_t=\sum_{n=1}^t y_n h_n;~~~U_t=\sum_{n=1}^t h_n^2,
\label{eq:est}
\end{equation}
while the corresponding minimum value of the conditional mean squared error takes the form
\begin{equation}
\inf_{\hat{x}_T} \bar{\Exp}_1[(\hat{x}_T-x)^2\ind{d_T=1}|\cH_T]=\frac{\sigma^2}{U_T + \frac{\sigma^2}{\sigma_x^2}}\Pro_1\left(d_T=1|\cH_T\right).
\label{eq:mmse1}
\end{equation}
Additionally we can write
\begin{equation}
\bar{\Exp}_1\left[x^2\ind{d_T=0}|\cH_T\right]=\Exp_1\left[
\x_T^2\ind{d_T=0} |\cH_T\right]
+\frac{\sigma^2}{U_T + \frac{\sigma^2}{\sigma_x^2}}\Pro_1\left(d_T=0|\cH_T\right).
\label{eq:mmse2}
\end{equation}
\end{lemma}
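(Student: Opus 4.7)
The plan is to reduce all three statements to a standard Bayesian Gaussian--Gaussian conjugate posterior computation, letting the random stopping time $T$ pass through the argument by exploiting its $\{\cH_t\}$-adaptedness.

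First I would apply the tower property, conditioning on the larger $\sigma$-algebra $\cF_T$. Since $\hat{x}_T$ and $d_T$ are $\cF_T$-measurable and $\ind{d_T=1}\ge 0$,
\begin{equation*}
\bar{\Exp}_1\!\left[(\hat{x}_T-x)^2\ind{d_T=1}\,\Big|\,\cH_T\right]
=\bar{\Exp}_1\!\left[\ind{d_T=1}\,\bar{\Exp}_1[(\hat{x}_T-x)^2|\cF_T]\,\Big|\,\cH_T\right].
\end{equation*}
On each $\omega\in\{d_T=1\}$ the inner conditional expectation is a quadratic in $\hat{x}_T(\omega)$; it is minimized pointwise by the Bayes estimator $\hat{x}_T=\bar{\Exp}_1[x|\cF_T]$, attaining the value $\mathrm{Var}_1(x|\cF_T)$. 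On $\{d_T=0\}$ the indicator kills the integrand, so the estimator there is immaterial. This reduces the first two assertions to identifying the posterior mean and variance of $x$ given $\cF_T$ under $\bar{\Pro}_1$.

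Next I would compute that posterior. By Assumption~(i), $x$ is independent of $\{h_t\}$ and, conditional on $x$ and $\cH_T$, the observations $y_1,\dots,y_T$ are independent with $y_t\sim\cN(xh_t,\sigma^2)$. For a deterministic horizon $n$, multiplying the prior $\cN(\mu_x,\sigma_x^2)$ by the likelihood and completing the square in
\begin{equation*}
-\tfrac{1}{2\sigma^2}\sum_{t=1}^n(y_t-xh_t)^2-\tfrac{1}{2\sigma_x^2}(x-\mu_x)^2
\end{equation*}
produces the conjugate Gaussian posterior with precisely the mean and variance appearing in \eqref{eq:est} and \eqref{eq:mmse1}. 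Because $T$ is $\{\cH_t\}$-adapted, $\{T=n\}\in\cH_n\subseteq\cF_n$, so this identification passes to the random stopping time by working event-by-event on each $\{T=n\}$. Hence $\bar{\Exp}_1[x|\cF_T]=\x_T$ and $\mathrm{Var}_1(x|\cF_T)=\sigma^2/(U_T+\sigma^2/\sigma_x^2)$.

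Finally I would assemble the three identities. Formula \eqref{eq:est} is just the posterior mean. For \eqref{eq:mmse1}, the variance $\sigma^2/(U_T+\sigma^2/\sigma_x^2)$ is $\cH_T$-measurable and factors out of the outer expectation, leaving $\bar{\Pro}_1(d_T=1|\cH_T)=\Pro_1(d_T=1|\cH_T)$ since $\{d_T=1\}$ is $\cF_T$-measurable and hence unchanged by marginalizing out $x$. For \eqref{eq:mmse2}, the same $\cF_T$-conditioning yields
\begin{equation*}
\bar{\Exp}_1[x^2\ind{d_T=0}|\cH_T]
=\bar{\Exp}_1\!\left[\ind{d_T=0}\bigl(\x_T^2+\tfrac{\sigma^2}{U_T+\sigma^2/\sigma_x^2}\bigr)\,\Big|\,\cH_T\right];
\end{equation*}
the variance piece splits off exactly as in \eqref{eq:mmse1}, while inside $\x_T^2$ the $\cH_T$-measurable factor $(U_T+\sigma^2/\sigma_x^2)^{-2}$ pulls out and the remaining integrand is $\cF_T$-measurable and $x$-free, so $\bar{\Exp}_1$ may be replaced by $\Exp_1$. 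The only real obstacle in this program is the random $T$ sitting inside the posterior computation; $\{\cH_t\}$-adaptedness of $T$ resolves it by decoupling $T$ from the randomness in $x$ and $\{w_t\}$, letting the fixed-horizon Gaussian conjugacy transfer verbatim to the stopped setting.
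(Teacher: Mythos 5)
Your proposal is correct and follows essentially the same route as the paper: condition on $\cF_T$ via the tower property, identify the Gaussian conjugate posterior of $x$ given $\cF_T$ (mean $\x_T$, variance $\sigma^2/(U_T+\sigma^2/\sigma_x^2)$), and use the $\cH_T$-measurability of $U_T$ to pull the variance out of the outer conditional expectation; the paper merely makes the reduction to fixed horizons explicit by writing each quantity as $\sum_t(\cdot)\ind{T=t}$, which is the same event-by-event argument you invoke. No gaps.
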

\begin{proof}
The proof is simple and based on the well known result that the mean squared error is minimized by the conditional mean of $x$ given all available observation history. The interesting detail is that this result is still valid even if the observation history is dictated by an $\{\cH_t\}$-adapted stopping time $T$ and an $\cF_T$-measurable decision rule $d_T$. To demonstrate \eqref{eq:mmse1}, using that $T$ is $\{\cH_t\}$-adapted, $d_T$ is $\cF_T$-measurable and $\cH_t\subseteq\cF_t$, we can write
\begin{multline*}
\bar{\Exp}_1[(\hat{x}_T-x)^2\ind{d_T=1}|\cH_T]=\bar{\Exp}_1\left[\sum_{t=0}^\infty(\hat{x}_t-x)^2\ind{d_t=1}\ind{T=t}\Big|\cH_t\right]\\
=\sum_{t=0}^\infty\bar{\Exp}_1\left[(\hat{x}_t-x)^2\ind{d_t=1}|\cH_t\right]\ind{T=t}\\
=\sum_{t=0}^\infty\Exp_1\left[\bar{\Exp}_1\left[(\hat{x}_t-x)^2|\cF_t\right]\ind{d_t=1}|\cH_t\right]\ind{T=t},
\end{multline*}
where for the last equality we used the tower property of expectation. From classical estimation theory (e.g.~Poor 1994, page 151) we know that 
$$
\inf_{\hat{x}_t}\bar{\Exp}_1\left[(\hat{x}_t-x)^2|\cF_t\right]=\frac{\sigma^2}{U_t+\frac{\sigma^2}{\sigma_x^2}}.
$$
This minimal value is attained by the conditional expectation $\x_t=\bar{\Exp}[x|\cF_t]$ which, due to the fact that $x$, given $\cF_t$, is Gaussian with mean $(V_t+\mu_x\frac{\sigma^2}{\sigma_x^2})/(U_t+\frac{\sigma^2}{\sigma_x^2})$ and variance $\sigma^2/(U_t + \frac{\sigma^2}{\sigma_x^2})$, is equal to
$$
\x_t=\frac{V_t + \mu_x\frac{\sigma^2}{\sigma_x^2}}{U_t + \frac{\sigma^2}{\sigma_x^2}}.
$$
Consequently, since $U_t$ is $\cH_t$-measurable we deduce
\begin{multline*}
\bar{\Exp}_1[(\hat{x}_T-x)^2\ind{d_T=1}|\cH_T]\geq \sum_{t=0}^\infty\Exp_1\left[\frac{\sigma^2}{U_t+\frac{\sigma^2}{\sigma_x^2}}\ind{d_t=1}|\cH_t\right]\ind{T=t}\\
=\sum_{t=0}^\infty\frac{\sigma^2}{U_t+\frac{\sigma^2}{\sigma_x^2}}\Pro_1(d_t=1|\cH_t)\ind{T=t}
=\frac{\sigma^2}{U_T+\frac{\sigma^2}{\sigma_x^2}}\Pro_1(d_T=1|\cH_T),
\end{multline*}
with equality, as we mentioned, when $\hat{x}_t=\x_t$ on $\{T=t\}$.

To prove \eqref{eq:mmse2}, we can write
\begin{multline}
\bar{\Exp}_1\left[x^2\ind{d_T=0}|\cH_T\right]=\sum_{t=0}^\infty\bar{\Exp}_1\left[x^2\ind{d_t=0}|\cH_t\right]\ind{T=t}\\
=\sum_{t=0}^\infty\Exp_1\left[\bar{\Exp}_1\left[x^2|\cF_t\right]\ind{d_t=0}|\cH_t\right]\ind{T=t}.
\label{eq:mbafla1}
\end{multline}
Using again, as we mentioned above, the fact that $x$ conditioned on $\cF_t$ is Gaussian with mean $\x_t$ and variance $\sigma^2/(U_t + \sigma^2/\sigma_x^2)$, we compute
$$
\bar{\Exp}_1\left[x^2|\cF_t\right]=\x_t^2+\frac{\sigma^2}{U_t + \frac{\sigma^2}{\sigma_x^2}}.
$$
Substituting this equality in \eqref{eq:mbafla1} and recalling that $U_t$ is $\cH_t$-measurable, yields the desired result.
\end{proof}

\subsection{Optimum detection}
If we consider the combined cost $\cC(T,d_T,\x_T)$ where the estimator $\hat{x}_T$ is replaced by the optimum $\x_T$ defined in \eqref{eq:est} then, using \eqref{eq:mmse1} and \eqref{eq:mmse2} we obtain
\begin{multline*}
\cC(T,d_T,\x_T)=c_0\Pro_0(d_T=1|\cH_T)+c_1\Pro_1(d_T=0|\cH_T)\\
+c_e\Exp_1\left[
\x_T^2\ind{d_T=0} |\cH_T\right]+c_e\frac{\sigma^2}{U_T + \frac{\sigma^2}{\sigma_x^2}}.
\end{multline*}
Due to the fact that $\x_T$ is the result of the minimization stated in Lemma\,\ref{lem:1} we have
$\cC(T,d_T,\x_T)\leq \cC(T,d_T,\hat{x}_T)$.
We note that the last term in the expression for $\cC(T,d_T,\x_T)$ does not depend on the decision function $d_T$, therefore, let us consider the sum of the first three terms of the right hand side and define the auxiliary cost
\begin{multline}
\tilde{\cC}(T,d_T)=c_0\Pro_0(d_T=1|\cH_T)+c_1\Pro_1(d_T=0|\cH_T)\\
+c_e\Exp_1\left[
\x_T^2\ind{d_T=0} |\cH_T\right].
\label{eq:Caux}
\end{multline}
In the sequel our goal is, for fixed $T$, to identify the decision function $\dec_T$ that minimizes $\tilde{\cC}(T,d_T)$ with respect to $d_T$. The solution to this problem is given in the next lemma.

\begin{lemma}\label{lem:2}
The decision function $\dec_T$ that minimizes the auxiliary cost function $\tilde{\cC}(T,d_T)$ with respect to $d_T$, on the event $\{T=t\}$, is given by the following formula
\begin{equation}
\dec_t=\left\{
\begin{array}{cl}
1&\text{if}~~c_0\leq \Li_t\left\{c_1+c_e\x_t^2
\right\}\\
0&\text{otherwise},
\end{array}
\right.
\label{eq:lem2.1}
\end{equation}
where $\Li_t$ is the conditional likelihood ratio of the pdfs of the two hypotheses given $\cH_t$, with the random variable $x$ under $\Hyp_1$ being marginalized, specifically
\begin{equation}
\Li_t=\frac{1}{\sqrt{U_t+\frac{\sigma^2}{\sigma_x^2}}}\frac{\sigma}{\sigma_x}e^{\frac{\left(V_t+\mu_x\frac{\sigma^2}{\sigma_x^2}\right)^2}{2\sigma^2\left(U_t+\frac{\sigma^2}{\sigma_x^2}\right)}-\mu_x^2\frac{1}{2\sigma_x^2}}.
\label{eq:lem2.2}
\end{equation}
The resulting minimum value of the auxiliary cost function takes the form
\begin{multline}
\inf_{d_T}\tilde{\cC}(T,d_T)
=\Exp_0\left[
\left(c_0-\Li_T\left\{c_1+c_e\x_T^2
\right\}\right)^{\!\!-} |\cH_T\right]\\
+c_1+c_e\left\{
\mu_x^2+\frac{\sigma_x^2U_T}{U_T+\frac{\sigma^2}{\sigma_x^2}}
\right\},
\label{eq:lem2.3}
\end{multline}
where $z^-=\min\{z,0\}$.
\end{lemma}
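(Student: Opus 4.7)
The plan is to reduce the auxiliary cost $\tilde{\cC}(T,d_T)$ to a single conditional expectation under the null measure $\Pro_0$, then minimize pointwise in $\omega$. Since $T$ is $\{\cH_t\}$-adapted and $d_T$ is $\cF_T$-measurable, the change of measure $d\Pro_1/d\Pro_0$ restricted to $\cY_T$, conditional on $\cH_T$, is exactly the likelihood ratio $\Li_T$ whose closed form in \eqref{eq:lem2.2} I would derive by the standard conjugate Gaussian integral
$$
\Li_t=\frac{\int \prod_{s=1}^t \frac{1}{\sqrt{2\pi\sigma^2}}e^{-(y_s-xh_s)^2/(2\sigma^2)}\,\frac{1}{\sqrt{2\pi\sigma_x^2}}e^{-(x-\mu_x)^2/(2\sigma_x^2)}\,dx}{\prod_{s=1}^t \frac{1}{\sqrt{2\pi\sigma^2}}e^{-y_s^2/(2\sigma^2)}},
$$
completing the square in the exponent in $x$.

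Next I would rewrite each term of \eqref{eq:Caux} as a $\Pro_0$-conditional expectation. Writing $\ind{d_T=1}=1-\ind{d_T=0}$, the first piece becomes $c_0-c_0\Exp_0[\ind{d_T=0}|\cH_T]$. For the $\Pro_1$ terms I would use the tower/change-of-measure identity $\Exp_1[Z|\cH_T]=\Exp_0[\Li_T Z|\cH_T]$ valid for any $\cF_T$-measurable $Z$ (this requires only that $T$ be $\{\cH_t\}$-adapted and is the same mechanism used in Lemma\,\ref{lem:1}). This gives
$$
\tilde{\cC}(T,d_T)=c_0+\Exp_0\!\left[\left(\Li_T\!\left\{c_1+c_e\!\left(\tfrac{V_T+\mu_x\sigma^2/\sigma_x^2}{U_T+\sigma^2/\sigma_x^2}\right)^{\!2}\right\}-c_0\right)\ind{d_T=0}\Big|\cH_T\right].
$$
Since $\ind{d_T=0}\in\{0,1\}$ and the outer conditioning fixes $c_0$ and $U_T$, the inner expectation is minimized pointwise by taking $\dec_T=0$ whenever the bracket is negative and $\dec_T=1$ otherwise; this yields exactly \eqref{eq:lem2.1} and the minimum equals $c_0+\Exp_0[(\Li_T\{\cdots\}-c_0)^{-}|\cH_T]$.

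To match \eqref{eq:lem2.3}, I use the identity $(y-c_0)^{-}=(y-c_0)-(y-c_0)^{+}=-(c_0-y)^{+}$ together with $(c_0-y)^{-}=-(y-c_0)^{+}$, which gives
$$
c_0+\Exp_0[(\Li_T A_T-c_0)^{-}|\cH_T]=\Exp_0[(c_0-\Li_T A_T)^{-}|\cH_T]+\Exp_0[\Li_T A_T|\cH_T],
$$
where $A_T=c_1+c_e\bigl(\tfrac{V_T+\mu_x\sigma^2/\sigma_x^2}{U_T+\sigma^2/\sigma_x^2}\bigr)^{2}$. The last expectation again becomes $\Exp_1[A_T|\cH_T]$; the $c_1$ part is trivial because $\Exp_0[\Li_T|\cH_T]=1$, while for the $c_e$ part I note that under $\Pro_1$ (with $x\sim\cN(\mu_x,\sigma_x^2)$) conditional on $\cH_T$ we have $V_T=xU_T+\sum_{t\leq T}w_th_t$, so $V_T|\cH_T\sim\cN(\mu_x U_T,\sigma_x^2U_T^2+\sigma^2 U_T)$.

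The only real step requiring care is that last moment computation: expanding the second moment gives
$$
\Exp_1\!\left[\bigl(V_T+\mu_x\sigma^2/\sigma_x^2\bigr)^{\!2}\big|\cH_T\right]=\sigma_x^2U_T^2+\sigma^2 U_T+\mu_x^2\bigl(U_T+\sigma^2/\sigma_x^2\bigr)^{\!2},
$$
and after dividing by $(U_T+\sigma^2/\sigma_x^2)^2$ the first two summands collapse to $\sigma_x^2U_T/(U_T+\sigma^2/\sigma_x^2)$, producing exactly the bracket $\mu_x^2+\sigma_x^2U_T/(U_T+\sigma^2/\sigma_x^2)$ of \eqref{eq:lem2.3}. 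The main obstacle is not any single calculation but the bookkeeping of the change of measure under $\cH_T$-conditioning, together with recognizing the algebraic manipulation of $(\cdot)^{-}$ that converts the natural minimum into the form stated in the lemma.
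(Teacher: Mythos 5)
Your proposal is correct and follows essentially the same route as the paper: change of measure via $\Li_T$ conditionally on $\cH_T$ (exploiting that $T$ is $\{\cH_t\}$-adapted), pointwise minimization over the indicator of the decision, and the Gaussian second-moment computation $V_T|\cH_T\sim\cN(\mu_xU_T,\sigma_x^2U_T^2+\sigma^2U_T)$ under $\Pro_1$ to obtain the bracket $\mu_x^2+\sigma_x^2U_T/(U_T+\sigma^2/\sigma_x^2)$. The only cosmetic difference is that you isolate $\ind{d_T=0}$ where the paper isolates $\ind{d_T=1}$, which costs you the extra (correct) identity $c_0+(y-c_0)^-=(c_0-y)^-+y$ to reach the stated form of \eqref{eq:lem2.3}.
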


\begin{proof}
The proof of this lemma presents no special difficulty. We can write
\begin{align}
\Pro_0(d_T=1|\cH_T)&=\sum_{t=0}^\infty\Exp_0[\ind{d_t=1}|\cH_t]\ind{T=t}\label{eq:mbofla1}\\
\Pro_1(d_T=0|\cH_T)&=\sum_{t=0}^\infty\Exp_0[\Li_t\ind{d_t=0}|\cH_t]\ind{T=t}.\label{eq:mbofla2}
\end{align}
Similarly we have
\begin{equation}
\Exp_1\left[
\x_T^2\ind{d_T=0} |\cH_T\right]
=\sum_{t=0}^\infty\Exp_0\left[\Li_t\x_t^2\ind{d_t=0} |\cH_t\right]\ind{T=t},
\label{eq:mbofla3}
\end{equation}
where we used the fact that $\x_t$ is $\cF_t$-measurable and $\Li_t$ is the corresponding conditional likelihood ratio of the two hypotheses. Substituting \eqref{eq:mbofla1},\eqref{eq:mbofla2},\eqref{eq:mbofla3}, in the definition of the auxiliary cost $\tilde{\cC}(T,d_T)$ in \eqref{eq:Caux} we obtain
\begin{multline*}
\tilde{\cC}(T,d_T)=\sum_{t=0}^\infty\Exp_0\left[
c_0\ind{d_t=1}+\Li_t\left\{c_1+c_e\x_t^2\right\}\ind{d_t=0}
|\cH_t\right]\ind{T=t}\\
=\sum_{t=0}^\infty\Exp_0\left[
\left(c_0-\Li_t\left\{c_1+c_e\x_t^2\right\}\right)\ind{d_t=1} |\cH_t\right]\ind{T=t}\\
+\sum_{t=0}^\infty\Exp_0\left[\Li_t\left\{c_1+c_e\x_t^2\right\} |\cH_t\right]\ind{T=t}\\
\geq\sum_{t=0}^\infty\Exp_0\left[
\left(c_0-\Li_t\left\{c_1+c_e\x_t^2\right\}\right)^-
 |\cH_t\right]\ind{T=t}\\
+\sum_{t=0}^\infty\Exp_1\left[
c_1+c_e\x_t^2
 |\cH_t\right]\ind{T=t}.
\end{multline*}
We can easily verify that we have equality when the decision function is according to \eqref{eq:lem2.1}. In the  last sum in the previous expression it can be shown that the corresponding expectation is equal to $c_1+c_e\{
\mu_x^2+\sigma_x^2U_T/(U_T+\frac{\sigma^2}{\sigma_x^2})\}$. Indeed this is true because $V_T$ on the event $\{T=t\}$, under $\Hyp_1$ and conditioned on $\cH_t$, is Gaussian with mean $\mu_xU_t$ and variance $\sigma_x^2U_t^2+\sigma^2 U_t=\sigma_x^2U_t(U_t+\frac{\sigma^2}{\sigma_x^2})$.

To show the validity of \eqref{eq:lem2.2} we have that the likelihood ratio of the two hypotheses, given $x$ and $\cH_t$, is equal to $\exp(-\frac{x^2}{2\sigma^2}U_t+\frac{x}{\sigma^2}V_t)$. Marginalizing $x$ using the Gaussian prior yields $\Li_t$ which can therefore be computed as
$$
\Li_t=\int_{-\infty}^{\infty} e^{-\frac{x^2}{2\sigma^2}U_t+\frac{x}{\sigma^2}V_t}\frac{1}{\sqrt{2\pi\sigma_x^2}}e^{-\frac{1}{2\sigma_x^2}(x-\mu_x)^2}\,dx.
$$
Combining the two exponents and ``completing the square'' for $x$, it is straightforward to prove \eqref{eq:lem2.2}.
\end{proof}

From \eqref{eq:lem2.1} if we set $c_e=0$, we end up with the pure detection problem, and the optimum detector reduces to the usual likelihood ratio test which is applied at the time of stopping $T$. However, when $c_e>0$, in the detection rule we take into account the \textit{optimum estimate}, and our detector is no longer a likelihood ratio test. Actually, this is exactly the point that discriminates our optimum joint detection/estimation scheme from the approach that solves the two problems separately by applying the corresponding optimum strategies. Note that the latter method would have simply applied the likelihood ratio test for detection and then the optimum estimator whenever the decision was in favor of $\Hyp_1$. Our scheme on the other hand makes a decision by taking into account the square of the optimum estimate.

\subsection{Optimum stopping time}
Using the results of Lemma\,\ref{lem:2}, in particular substituting \eqref{eq:lem2.3} in the combined cost function, we obtain
\begin{multline}
\cC(T,\dec_T,\x_T)=\Exp_0\left[
\left(c_0-\Li_T\left\{c_1+c_e\x_T^{2}
\right\}\right)^{-} |\cH_T\right]+c_1+c_e(\mu_x^2+\sigma_x^2)
\label{eq:opt2}
\end{multline}
From the way $\dec_T,\x_T$ were defined, we clearly deduce that any triplet $(T,d_T,\hat{x}_T)$ satisfies the following inequality
\begin{equation}
\cC(T,\dec_T,\x_T)\leq\cC(T,d_T,\hat{x}_T).
\label{eq:ineq2}
\end{equation}
Let us now make a more explicit computation of the conditional expectation appearing in \eqref{eq:opt2}. For this reason, in the next lemma we define a suitable function $\cG(U)$ for which we also prove a monotonicity property that plays a crucial role in specifying the final term of our desired triplet, namely the optimum stopping time. The lemma is based on the observation that on the event $\{T=t\}$ and given $\cH_t$, we have $U_t$ known and, under $\Hyp_0$, $V_t\sim\cN(0,\sigma^2U_t)$. 

\begin{lemma}\label{lem:3}
For $U\geq0$, define the following function
\begin{multline}
\cG(U)=\\
\int_{-\infty}^{\infty}\!\!\left(c_0-
\frac{\frac{\sigma}{\sigma_x}e^{\frac{\left(V+\mu_x\frac{\sigma^2}{\sigma_x^2}\right)^2}{2\sigma^2\left(U+\frac{\sigma^2}{\sigma_x^2}\right)}-\mu_x^2\frac{1}{2\sigma_x^2}}}{\sqrt{U+\frac{\sigma^2}{\sigma_x^2}}}
\left[c_1+c_e\left(\frac{V+\mu_x\frac{\sigma^2}{\sigma_x^2}}{U+\frac{\sigma^2}{\sigma_x^2}}\right)^{\!\!2}
\right]\right)^{\!\!-}\!\!\!\frac{e^{-\frac{1}{2\sigma^2U}V^2}}{\sqrt{2\pi\sigma^2 U}}\,dV,
\label{eq:GU}
\end{multline}
then $\cG(U)$ is continuous, strictly decreasing in $U\geq0$, with $\lim_{U\to\infty}\cG(U)=-c_1-c_e(\mu_x^2+\sigma_x^2)$ and $\cG(0)=(c_0-c_1-c_e\mu_x^2)^-$.
\end{lemma}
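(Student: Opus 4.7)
The argument splits into three pieces. I would first handle the two boundary identities. At $U=0$, $V$ is a point mass at $0$, and substitution in \eqref{eq:GU} gives $\Li=1$ (the prefactor $(\sigma/\sigma_x)/\sqrt{\sigma^2/\sigma_x^2}=1$ and the exponent cancels) and $\hat x=\mu_x$, so the integrand collapses to the constant $(c_0-c_1-c_e\mu_x^2)^-$. For the $U\to\infty$ limit, abbreviating $B:=c_1+c_e\hat x^2$, I would use $(c_0-z)^-=-(z-c_0)^+$ and the decomposition $\cG(U)=-\Exp_0[\Li B]+\Exp_0[\min(\Li B,c_0)]$. A change of measure gives $\Exp_0[\Li B]=\bar{\Exp}_1[B]=c_1+c_e\{\mu_x^2+\sigma_x^2 U/(U+\sigma^2/\sigma_x^2)\}\to c_1+c_e(\mu_x^2+\sigma_x^2)$. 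The rescaling $V=\sigma\sqrt U Z$ with $Z\sim\cN(0,1)$ shows $\Li=O(U^{-1/2})$ pointwise in $Z$, so $\Li B\to 0$ $\Pro_0$-a.s.\ and bounded convergence (dominator $c_0$) forces $\Exp_0[\min(\Li B,c_0)]\to 0$. Continuity on $[0,\infty)$ follows by dominated convergence applied to \eqref{eq:GU}, the integrand being continuous in $U$ for each $V$ and dominated in absolute value by the integrable product $Bp_1$ uniformly on compact $U$-sets.

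For monotonicity, the plan is to embed $V$ into a continuous-time process $\{V_U\}_{U\ge 0}$ so that the marginal distribution of $V_U$ matches both hypotheses in \eqref{eq:GU}: under $\Hyp_0$ take $V_U=\sigma W_U$ for a standard Brownian motion $W_U$, and under $\Hyp_1$ with prior $x\sim\cN(\mu_x,\sigma_x^2)$ take $V_U=xU+\sigma W_U$. Let $\mathscr{V}_U=\sigma\{V_s:0\le s\le U\}$. Then $\Li_U$ is a $\Pro_0$-martingale by the Radon--Nikodym property, and the posterior mean $\hat x_U=\bar{\Exp}_1[x\mid\mathscr{V}_U]$ is a $\bar{\Pro}_1$-martingale, hence $\hat x_U^2$ is a $\bar{\Pro}_1$-submartingale. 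The generalized Bayes formula $\Exp_0[\Li_{U'}Y\mid\mathscr{V}_U]=\Li_U\bar{\Exp}_1[Y\mid\mathscr{V}_U]$ then yields, for $X_U:=\Li_U(c_1+c_e\hat x_U^2)$,
\begin{equation*}
\Exp_0[X_{U'}\mid\mathscr{V}_U]=\Li_U\bar{\Exp}_1[c_1+c_e\hat x_{U'}^2\mid\mathscr{V}_U]\;\ge\;\Li_U(c_1+c_e\hat x_U^2)=X_U,
\end{equation*}
so $X_U$ is a $\Pro_0$-submartingale. Since $z\mapsto(z-c_0)^+$ is convex and nondecreasing, $(X_U-c_0)^+$ is a $\Pro_0$-submartingale as well, and hence $\cG(U)=-\Exp_0[(X_U-c_0)^+]$ is nonincreasing in $U$.

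The main obstacle I anticipate is upgrading nonincreasing to \emph{strictly} decreasing. This needs $(X_U-c_0)^+$ to be a strict submartingale, which requires two ingredients: a strictly positive innovation --- the explicit computation $\bar{\Exp}_1[(\hat x_{U'}-\hat x_U)^2\mid\mathscr{V}_U]=\sigma^2(U'-U)/[(U+\sigma^2/\sigma_x^2)(U'+\sigma^2/\sigma_x^2)]>0$ handles the case $c_e>0$, while a parallel martingale-plus-non-affineness-of-$(\cdot-c_0)^+$ argument covers the residual case $c_e=0$, $c_1>0$ --- together with $\Pro_0(X_U>c_0)>0$ for every $U>0$, which is forced by the unbounded exponential tails of $\Li_U$ under $\Pro_0$. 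Pushing these facts through the convex nonlinearity $(\cdot-c_0)^+$ then yields the strict comparison $\Exp_0[(X_{U'}-c_0)^+]>\Exp_0[(X_U-c_0)^+]$, completing the monotonicity claim.
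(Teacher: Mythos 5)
Your argument is correct in its essentials but follows a genuinely different route from the paper's. The paper proves monotonicity by brute force: it identifies the region $\cT(U)$ where the integrand is negative, differentiates $\cG(U)$ under the integral sign after a change of variables, and verifies through a lengthy explicit computation that every term of $\cG'(U)$ is negative; the limit $U\to\infty$ is then obtained via an order-of-magnitude estimate $g(U)=\Theta(U\log U)$ for the boundary of $\cT(U)$. Your submartingale embedding replaces all of this: writing $\cG(U)=-\Exp_0[(X_U-c_0)^+]$ with $X_U=\Li_U(c_1+c_e\hat{x}_U^2)$, the Girsanov/Bayes identity makes $X_U$ a $\Pro_0$-submartingale, and composition with the convex nondecreasing map $z\mapsto(z-c_0)^+$ gives monotonicity in three lines. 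Your treatment of the two limits is also cleaner (bounded convergence after the rescaling $V=\sigma\sqrt{U}Z$, versus the paper's asymptotics for $g(U)$). What the paper's computation buys is a closed-form derivative, and hence strictness ``for free'' wherever that derivative is visibly negative; what yours buys is transparency and near-independence from the Gaussian algebra. (Your continuity argument, like the paper's one-line claim, still needs a single dominating function valid on a neighborhood of each fixed $U$, but this is routine.)

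The one genuine gap is the strictness step, which you correctly flag as the obstacle. For $c_e>0$ your sketch does close: $\Exp_0[X_{U'}\mid\mathscr{V}_U]-X_U=c_e\Li_U\,\sigma^2(U'-U)/[(U+\sigma^2/\sigma_x^2)(U'+\sigma^2/\sigma_x^2)]>0$ almost surely, and on the positive-probability event $\{X_U>c_0\}$ (which is nonempty because $\Li_U$ is unbounded in $V$) the map $z\mapsto(z-c_0)^+$ is strictly increasing, so the comparison is strict. For $c_e=0$, however, the ``non-affineness'' route needs the conditional law of $X_{U'}=c_1\Li_{U'}$ to straddle $c_0$, and this fails whenever $c_1\inf_V\Li_{U'}>c_0$, i.e. for $U'$ in an initial interval when $c_0<c_1e^{-\mu_x^2/(2\sigma_x^2)}$; there one checks directly that $\cG(U)\equiv c_0-c_1$, so $\cG$ is not strictly decreasing. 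This is not a defect of your method but of the statement itself: the paper's own case analysis yields, on exactly that regime, $\cG'(U)=-c_e\sigma_x^2(\sigma^2/\sigma_x^2)/(U+\sigma^2/\sigma_x^2)^2$, which vanishes when $c_e=0$, contradicting its remark that strictness survives $c_e=0$. You should therefore state your conclusion as: strictly decreasing for $c_e>0$; for $c_e=0$ strictly decreasing only outside the (possibly empty) initial interval on which $c_1\Li_U>c_0$ holds almost surely.
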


\begin{proof}
Because the proof is very technical, we will not present all computational details. That $\cG(U)$ is continuous it is obvious since the integrand is continuous in $U$ and $V$. Let us now prove the desired monotonicity property of $\cG(U)$. For simplicity, call $\kappa=\frac{\sigma^2}{\sigma_x^2}$ and define the function $G(U,V)$
\begin{multline}
G(U,V)=\\
\left(c_0-
\sqrt{\frac{\kappa}{U+\kappa}}e^{\frac{(V+\mu_x\kappa)^2}{2\sigma^2(U+\kappa)}-\frac{\mu_x^2\kappa}{2\sigma^2}}
\left[c_1+c_e\left(\frac{V+\mu_x\kappa}{U+\kappa}\right)^2
\right]\right)\frac{e^{-\frac{1}{2\sigma^2U}V^2}}{\sqrt{2\pi\sigma^2 U}}\\
=c_0\frac{e^{-\frac{1}{2\sigma^2U}V^2}}{\sqrt{2\pi\sigma^2 U}}
-\left[c_1+c_e\left(\frac{V+\mu_x\kappa}{U+\kappa}\right)^2\right]
\frac{e^{-\frac{(V-\mu_xU)^2}{2\sigma_x^2U(U+\kappa)}}}{\sqrt{2\pi\sigma_x^2U(U+\kappa)}}
\label{eq:lem3.0}
\end{multline}
Denote with $g(U)$ the solution of the equation
\begin{equation}
c_0=\sqrt{\frac{\kappa}{U+\kappa}}e^{\frac{g}{2\sigma^2(U+\kappa)}-\frac{\mu_x^2\kappa}{2\sigma^2}}
\left[c_1+c_e\frac{g}{(U+\kappa)^2}\right],
\label{eq:lem3.1}
\end{equation}
where $g$ replaces $(V+\mu_x\kappa)^2$. Even though the latter quantity is nonnegative we allow $g$ to take also negative values thus guaranteeing that \eqref{eq:lem3.1} has always a solution. Indeed
note that the right hand side in \eqref{eq:lem3.1} is strictly decreasing in $U\geq0$ and strictly increasing in $g$. For fixed $U$ if we set $g=-(U+\kappa)^2c_1/c_e$, the right hand side becomes 0. On the other hand by letting $g\to\infty$, the right hand side tends to $\infty$ as well. Due to continuity and strict increase in $g$ there is a unique solution $g(U)$. 

Using $g(U)$ we can now deduce that the values of $V$ for which the integrand in \eqref{eq:GU} and therefore $G(U,V)$ is nonpositive is $V\in\cT(U)=(-\infty,-V_1(U)]\cup[V_2(U),\infty)$ where $V_1(U)=\sqrt{g^+(U)}+\mu_x\kappa$, $V_2(U)=\sqrt{g^+(U)}-\mu_x\kappa$, and $z^+=\max\{z,0\}$. Note that for values of $U$ for which $g(U)\leq0$ we have $-V_1(U)=V_2(U)=-\mu_x\kappa$, therefore both quantities coincide. When, however, $g(U)>0$ then $G(U,-V_1(U))=G(U,V_2(U))=0$.
Using the previous definitions and observations we have the following expressions for $\cG(U)$
\begin{multline}
\cG(U)=\int_{-\infty}^{-V_1(U)}G(U,V)dV+\int_{V_2(U)}^\infty G(U,V)dV\\
=\int_{\cT(U)}G(U,V)dV=\int_{-\infty}^\infty G(U,V)\indno{\cT(U)}(V)dV.
\label{eq:lem3.mbifla1}
\end{multline}

To show that $\cG(U)$ is decreasing, it suffices to show that its derivative is negative. 
Let first $U$ be such that the solution to \eqref{eq:lem3.1} satisfies $g(U)\leq0$. In this case, as we mentioned, we have $-V_1(U)=V_2(U)=-\mu_x\kappa$ suggesting that $\cT(U)$ becomes the whole real line. Thus, substituting \eqref{eq:lem3.0} in \eqref{eq:lem3.mbifla1}, we can write
\begin{multline*}
\cG'(U)=\left(\int_{-\infty}^{\infty}G(U,V)dV\right)'=\left(c_0-c_1-c_e\left\{\mu_x^2+\frac{\sigma_x^2 U}{U+\kappa}\right\}\right)'\\
=-c_e\frac{\sigma_x^2\kappa}{(U+\kappa)^2}<0,
\end{multline*}
and, therefore, $\cG(U)$ is strictly decreasing for all $U\geq0$ for which $g(U)\leq0$.

Let now $U$ be such that the solution to \eqref{eq:lem3.1} satisfies $g(U)>0$. Substituting again \eqref{eq:lem3.0} in \eqref{eq:lem3.mbifla1} and changing variables $z=V/\sqrt{U}$ we have
$$
\cG(U)=\int_{-\infty}^{-\bar{V}_1(U)}\bar{G}(U,z)dz+\int_{\bar{V}_2(U)}^{\infty}\bar{G}(U,z)dz=\int_{\bar{\cT}(U)}\bar{G}(U,z)dz
$$
where
\begin{multline*}
\bar{G}(U,z)=\sqrt{U}G(U,z\sqrt{U})\\
=c_0\frac{e^{-\frac{z^2}{2\sigma^2}}}{\sqrt{2\pi\sigma^2}}-\left[c_1+c_e\left(\frac{z\sqrt{U}+\mu_x\kappa}{U+\kappa}\right)^2\right]\frac{e^{-\frac{(z-\mu_x\sqrt{U})^2}{2\sigma_x^2(U+\kappa)}}}{\sqrt{2\pi\sigma_x^2(U+\kappa)}},
\end{multline*}
and $\bar{\cT}(U)=(-\infty,-\bar{V}_1(U)]\cup[\bar{V}_2(U),\infty)$ with $\bar{V}_i(U)=V_i(U)/\sqrt{U};~i=1,2$.
As before it is true that $\bar{G}(U,-\bar{V}_1(U))=\bar{G}(U,\bar{V}_2(U))=0$. Taking the derivative with respect to $U$ yields
\begin{multline*}
\cG'(U)=-\bar{G}(U,-\bar{V}_1(U))\bar{V}'_1(U)-\bar{G}(U,\bar{V}_2(U))\bar{V}'_2(U)+\int_{\bar{\cT}(U)}\partial_U\bar{G}(U,z)dz\\
=\int_{\bar{\cT}(U)}\partial_U\bar{G}(U,z)dz\\
=-\int_{\bar{\cT}(U)}\partial_U \left( \left[c_1+c_e\left(\frac{z\sqrt{U}+\mu_x\kappa}{U+\kappa}\right)^2\right]\frac{e^{-\frac{(z-\mu_x\sqrt{U})^2}{2\sigma_x^2(U+\kappa)}}}{\sqrt{2\pi\sigma_x^2(U+\kappa)}}\right) dz.
\end{multline*}
The latter integral after some tedious mathematical manipulations can be computed explicitly yielding
\begin{multline*}
\cG'(U)=-\frac{c_1\sqrt{g(U)}}{2(U+\kappa)\sqrt{2\pi\sigma_x^2U(U+\kappa)}}\,\Omega(U)\\
-\frac{c_e \sigma^4}{(U+\kappa)^2}\left\{\Phi\left(-\frac{\sqrt{g(U)}+\mu_x(U+\kappa)}{\sigma_x\sqrt{U(U+\kappa)}}\right) + \Phi\left(-\frac{\sqrt{g(U)}-\mu_x(U+\kappa)}{\sigma_x\sqrt{U(U+\kappa)}}\right) \right\}\\
-\frac{c_e \sigma^4\sqrt{g(U)}}{(U+\kappa)^2\sqrt{2\sigma_x^2\pi U(U+\kappa)}} \left( \frac{g(U)}{2\sigma_x^2 \kappa(U+\kappa)} +1 \right)\Omega(U).
\end{multline*}
where
$$
\Omega(U)=
e^{-\frac{[\sqrt{g(U)}+\mu_x(U+\kappa)]^2}{2\sigma_x^2U(U+\kappa)}}+e^{-\frac{[\sqrt{g(U)}-\mu_x(U+\kappa)]^2}{2\sigma_x^2U(U+\kappa)}},
$$
and $\Phi(x)$ is the standard Gaussian cdf.
We realize that all parts involving $c_1$ and $c_e$ are negative, suggesting that $\cG(U)$ is strictly decreasing. This is still true even if we limit ourselves to the pure detection problem by enforcing $c_e=0$.

To conclude our proof we need to show the validity of the formulas for $\cG(0)$ and $\lim_{U\to\infty}\cG(U)$. For $U\to0$ the term $(e^{-V^2/2\sigma^2U})/\sqrt{2\pi\sigma^2 U}$ in \eqref{eq:GU}, which corresponds to a Gaussian pdf with mean 0 and variance $\sigma^2U$, tends to a Dirac function at $V=0$. For this case it is straightforward to verify the expression for $\cG(0)$. Computing the limit for $U\to\infty$ needs more work. Note first that the solution $g(U)$ of equation \eqref{eq:lem3.1}, for large $U$, can be expressed in order of magnitude as $g(U)=\Theta(U\log U)$. This means that we can find two positive constants $a_1,a_2$ independent from $U$ such that, for large enough $U$, we have $a_1 U\log U\leq g(U)\leq a_2 U\log U$. That this is indeed possible, can be readily seen because, for sufficiently large $U$, we have $g(U)\geq0$ and $U+\kappa\geq1$, therefore we can upper and lower bound $g(U)$ from \eqref{eq:lem3.1} by observing that
$$
c_1\leq c_1+c_e\frac{g}{(U+\kappa)^2}\leq 
\max\{c_1,2c_e\sigma^2\}e^{\frac{g}{2\sigma^2(U+\kappa)}},
$$
where for the upper bound we used the inequality $e^x\geq x+1$.
These two bounds generate, immediately, the corresponding desired upper and lower bounds for $g(U)$.
A direct consequence of the order of magnitude estimate of $g(U)$ is that, since $V_1(U)=\sqrt{g^+(U)}+\mu_x\kappa$ and $V_2(U)=\sqrt{g^+(U)}-\mu_x\kappa$, we have that $V_1(U),V_2(U)$ are both $\Theta(\sqrt{U\log U})$. Using \eqref{eq:lem3.0} and \eqref{eq:lem3.mbifla1} to compute $\cG(U)$ we can see that the first term involving $c_0$ is equal to
$$
c_0\left\{\Phi\left(-\frac{V_1(U)}{\sigma\sqrt{U}}\right)+\Phi\left(-\frac{V_2(U)}{\sigma\sqrt{U}}\right)\right\}.
$$
This term tends to 0 as $U\to\infty$, since $V_i(U)/\sqrt{U}\to\infty$.
In the second term involving $c_1,c_e$, let us make the change of variables $z=\frac{V-\mu_x U}{\sigma_x\sqrt{U(U+\kappa)}}$, then we can write
\begin{multline*}
\int_{\cT(U)}\left[c_1+c_e\left(\frac{V+\mu_x\kappa}{U+\kappa}\right)^2\right]
\frac{e^{-\frac{(V-\mu_xU)^2}{2\sigma_x^2U(U+\kappa)}}}{\sqrt{2\pi\sigma_x^2U(U+\kappa)}}dV\\
=\int_{\tilde{\cT}(U)}\left[c_1+c_e\left(\mu_x+z\sigma_x\sqrt{\frac{U}{U+\kappa}}\right)^2\right]
\frac{e^{-\frac{z^2}{2}}}{\sqrt{2\pi}},
\end{multline*}
where we recall $\cT(U)=(-\infty,-V_1(U)]\cup[V_2(U),\infty)$ and we define $\tilde{\cT}(U)=(-\infty,-\tilde{V}_1(U)]\cup[\tilde{V}_2(U),\infty)$ with $\tilde{V}_1(U)=(V_1(U)+\mu_x U)/\rho(U)$, $\tilde{V}_2(U)=(V_2(U)-\mu_x U)/\rho(U)$ and $\rho(U)=\sigma_x\sqrt{U(U+\kappa)}$. Note in the last integral that the integrand is nonnegative. Furthermore integration over $\tilde{\cT}(U)$ can be regarded as integration over the whole real line after multiplying the integrand by the indicator function of the set $\tilde{\cT}(U)$. Because the indicator is nonnegative and upper bounded by 1 and $[\mu_x+z\sigma_x\sqrt{U/(U+\kappa)}]^2\leq2(\mu^2_x+z^2\sigma^2_x)$, we can upper bound the integrand by a function which does not involve $U$ and is integrable. This allows for the application of Bounded Convergence which combined with the observation that $-\tilde{V}_1(U)\to-\mu_x/\sigma_x$ and $\tilde{V}_2(U)\to-\mu_x/\sigma_x$, meaning that $\tilde{\cT}(U)$ tends to the whole real line or $\indno{\tilde{\cT}(U)}(z)\to1$, implies
\begin{multline*}
\lim_{U\to\infty}\int_{-\infty}^{\infty}\left[c_1+c_e\left(\mu_x+z\sigma_x\sqrt{\frac{U}{U+\kappa}}\right)^2\right]\indno{\tilde{\cT}(U)}(z)\frac{e^{-\frac{z^2}{2}}}{\sqrt{2\pi}}dz\\
=\int_{-\infty}^{\infty}\lim_{U\to\infty}\left[c_1+c_e\left(\mu_x+z\sigma_x\sqrt{\frac{U}{U+\kappa}}\right)^2\right]\indno{\tilde{\cT}(U)}(z)\frac{e^{-\frac{z^2}{2}}}{\sqrt{2\pi}}dz\\
=\int_{-\infty}^\infty\left[c_1+c_e\left(\mu_x+z\sigma_x\right)^2\right]\frac{e^{-\frac{z^2}{2}}}{\sqrt{2\pi}}dz=c_1+c_e(\mu_x^2+\sigma_x^2),
\end{multline*}
yielding the desired expression. This concludes the proof of our lemma.
\end{proof}

The function $\cG(U)$ introduced in Lemma\,\ref{lem:3} is very important and will simplify, considerably, the representation of the combined cost $\cC(T,\dec_T,\x_T)$. Indeed, by recalling the definition of $\x_t$ and $\Li_t$ from \eqref{eq:est} and \eqref{eq:lem2.2} respectively, we can identify the conditional expectation appearing in \eqref{eq:opt2} as $\cG(U_T)$. This means that in $\cC(T,d_T,\hat{x}_T)$ if we replace $\hat{x}_T,d_T$ with their optimum counterparts $\x_T,\dec_T$ then we have the following simple expression for the resulting combined cost
\begin{equation}
\cC(T,\dec_T,\x_T)=\cG(U_T)+c_1+c_e(\mu_x^2+\sigma_x^2).
\label{eq:mbifla}
\end{equation}
We are now in a position to reveal the optimum stopping time and finalize the desired triplet that solves the constrained optimization problem introduced in \eqref{eq:eq3}. The next theorem presents the complete solution.

\begin{theorem}
In the constraint in \eqref{eq:eq3}, let the maximal allowable cost $C$ satisfy 
$
\min\{c_0,c_1+c_e\mu_x^2\}+c_e\sigma_x^2>C>0.
$
Then, the optimum triplet $(\T,\dec_{\T},\x_{\T})$ that solves the corresponding constrained optimization problem is:
\begin{equation}
\T=\inf\{t>0:U_t\geq\gamma\},
\label{eq:th.1}
\end{equation}
where threshold $\gamma>0$ is the solution of the equation
\begin{equation}
\cG(\gamma)=C-c_1-c_e(\mu_x^2+\sigma_x^2).
\label{eq:th.2}
\end{equation}
The other two elements of the optimum triplet are given by \eqref{eq:est} for the optimum estimator and \eqref{eq:lem2.1} for the optimum detector and both, detector and estimator, need to be applied at the time of stopping $\T$.
\end{theorem}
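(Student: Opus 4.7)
The plan is to reduce the theorem to a deterministic statement about the function $\cG$ from Lemma \ref{lem:3}, combining it with equation \eqref{eq:mbifla} and the pathwise minimization perspective induced by the $\{\cH_t\}$-adaptedness of the stopping time.

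First I would confirm that the threshold equation \eqref{eq:th.2} admits a unique positive solution. Writing $\tilde{C} = C - c_1 - c_e(\mu_x^2 + \sigma_x^2)$, the hypothesis $\min\{c_0, c_1 + c_e\mu_x^2\} + c_e\sigma_x^2 > C > 0$ rearranges to $\cG(0) > \tilde{C} > \lim_{U \to \infty} \cG(U)$; verifying the upper inequality requires a short case split on the sign of $c_0 - c_1 - c_e\mu_x^2$, reflecting the $(\cdot)^-$ appearing in $\cG(0) = (c_0 - c_1 - c_e\mu_x^2)^-$. Continuity and strict monotonicity of $\cG$ (Lemma \ref{lem:3}) then yield a unique $\gamma > 0$ with $\cG(\gamma) = \tilde{C}$. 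Assumption \eqref{eq:wp1} forces $U_t \to \infty$ almost surely, so the $\{\cH_t\}$-adapted crossing time $\T = \inf\{t: U_t \geq \gamma\}$ is a.s.\ finite for every realization of $\{h_t\}$.

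For feasibility I would substitute $\T$ into \eqref{eq:mbifla} to obtain $\cC(\T, \dec_\T, \x_\T) = \cG(U_\T) + c_1 + c_e(\mu_x^2 + \sigma_x^2)$. Since $U_\T \geq \gamma$ by construction and $\cG$ is strictly decreasing, $\cG(U_\T) \leq \cG(\gamma) = \tilde{C}$, which yields $\cC(\T, \dec_\T, \x_\T) \leq C$, so the candidate triplet lies in the feasible set.

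For optimality I would argue pathwise in $\{h_t\}$. Let $(T, d_T, \hat{x}_T)$ be any feasible triplet. Chaining \eqref{eq:ineq2} with \eqref{eq:mbifla} applied to $T$ yields $\cG(U_T) + c_1 + c_e(\mu_x^2 + \sigma_x^2) \leq \cC(T, d_T, \hat{x}_T) \leq C$, hence $\cG(U_T) \leq \cG(\gamma)$, and strict monotonicity forces $U_T \geq \gamma$ on every realization. Because $U_t$ is nondecreasing in $t$ and $\T$ is the first passage of $U_t$ above $\gamma$, this gives $T \geq \T$ realization by realization, which is precisely the pathwise notion of optimality sought in \eqref{eq:eq3}. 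I expect no serious obstacle here, since the deep work has already been absorbed into the three lemmas, especially the monotonicity of $\cG$. The only delicate bookkeeping is the case analysis needed to translate the hypothesis on $C$ into the open range of $\cG$, together with a moment's care that the inequality $T \geq \T$ is inherited pathwise from the $\cH_T$-measurable nature of the constraint.
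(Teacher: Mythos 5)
Your proposal is correct and follows essentially the same route as the paper: translate the hypothesis on $C$ into the range $\cG(0)>C-c_1-c_e(\mu_x^2+\sigma_x^2)>\lim_{U\to\infty}\cG(U)$, invoke the continuity and strict monotonicity of $\cG$ from Lemma~\ref{lem:3} to get a unique $\gamma>0$ and almost-sure finiteness of $\T$ via \eqref{eq:wp1}, and then chain \eqref{eq:ineq2} with \eqref{eq:mbifla} to force $U_T\geq\gamma$, hence $T\geq\T$ pathwise. Your explicit feasibility check that $\cC(\T,\dec_\T,\x_\T)\leq C$ is a small but welcome addition that the paper leaves implicit.
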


\begin{proof}
First note that when $\min\{c_0,c_1+c_e\mu_x^2\}+c_e\sigma_x^2>C>0$, then $C-c_1-c_e(\mu_x^2+\sigma_x^2)$ takes values in the interior of the interval defined by the maximal $\cG(0)$ and minimal $\lim_{U\to\infty}\cG(U)$  value of the function $\cG(U)$. Consequently, because of the strict monotonicity and continuity of $\cG(U)$, equation \eqref{eq:th.2} has always a positive solution $\gamma=\cG^{-1}(C-c_1-c_e(\mu_x^2+\sigma_x^2))>0$ which is unique. Given that $U_0=0$; $U_t=\sum_{n=1}^t h_n^2$ is increasing; and by Assumption iii) we have $\lim_{t\to\infty}U_t=\infty$ with probability 1, we also conclude that the stopping time $\T$ defined in \eqref{eq:th.1} is almost surely finite.

Let us now show the desired optimality of the proposed triplet. Consider any alternative triplet $(T,d_T,\hat{x}_T)$ that satisfies the constraint $C\geq\cC(T,d_T,\hat{x}_T)$. Because of \eqref{eq:ineq2} and \eqref{eq:mbifla} we conclude
$$
C\geq \cC(T,d_T,\hat{x}_T)\geq\cC(T,\dec_T,\x_T)=\cG(U_T)+c_1+c_e(\mu_x^2+\sigma_x^2).
$$
The previous inequality combined with \eqref{eq:th.2} suggests that
$$
\cG(U_T)\leq C-c_1-c_e(\mu_x^2+\sigma_x^2)=\cG(\gamma)
$$
which, due to the strict decrease of $\cG(U)$, implies $U_T\geq\gamma$. From the latter we deduce that $T\geq\T$ since, by definition, $\T$ is the \textit{smallest} time instant for which this inequality holds. This establishes the optimality of the triplet $(\T,\dec_{\T},\x_{\T})$.
\end{proof}

{\sc Remark\,1:} For the completeness of our theorem we must also add that if $C\geq\min\{c_0,c_1+c_e\mu_x^2\}+c_e\sigma_x^2$ then we can verify that the optimum stopping time is $\T=0$ (no observations are needed) and the optimum joint detection/estimation structure relies, solely, on prior information. In particular if $c_0\leq c_1+c_e\mu_x^2$, we decide in favor of $\Hyp_1$ and provide as estimate the mean, that is, $\x_0=\mu_x$; whereas if $c_0> c_1+c_e\mu_x^2$, we decide in favor of $\Hyp_0$ and, of course, there is no need for any estimate.

{\sc Remark\,2:} Our theorem suggests that the optimal time to stop is when the running energy $\{U_t\}$ of the process $\{h_t\}$ exceeds the threshold $\gamma$ for the first time. This will happen with probability 1, due to \eqref{eq:wp1} in Assumption iii). This is the only requirement imposed on $\{h_t\}$ while no additional prior information is needed regarding this observed process. As far as threshold $\gamma$ is concerned, it is clear that the solution to equation \eqref{eq:th.2} can be computed numerically. 

{\sc Remark\,3:} The optimum estimate $\x_{\T}$ \textit{must} be computed when we stop at $\T$. However, initially, it is treated as an auxiliary quantity which is necessary for the application of the optimum decision rule $\dec_{\T}$. When the decision is in favor of hypothesis $\Hyp_1$, \textit{only then} $\x_{\T}$ is regarded as the actual estimate of $x$ .

{\sc Remark\,4:} As we mentioned earlier, if we select $c_e=0$ then our joint setup reduces to a pure detection problem. What is interesting in our formulation is that the optimum stopping time $\T$ is still defined through \eqref{eq:th.1} while the optimum decision function $\dec_{\T}$ becomes a likelihood ratio test where $\Li_{\T}$ is compared against the threshold $\frac{c_0}{c_1}$. This is in contrast with SPRT where, as we recall, we have a running likelihood ratio compared against two, time-varying and dependent on $\{h_t\}$, thresholds that are not possible to compute analytically. Furthermore, SPRT is optimum only when the observations are i.i.d.~whereas our simple scheme enjoys optimality even if the process $\{h_t\}$ is dependent and time varying with unknown distribution. These interesting optimality properties of our joint detection/estimation strategy are a consequence of defining the cost $\cC(T,d_T,\hat{x}_T)$ under the conditional form depicted in \eqref{eq:eq2}.

\section*{Acknowledgement}
This work was supported by the U.S. National Science Foundation under Grant CIF1064575.

\end{document}